\newtheorem{theorem}{Theorem}
\theoremstyle{remark}
\newtheorem{remark}{Remark}
\numberwithin{equation}{section}
\begin{document}

\title[Difference Equation for the Hypergeometric Function]
{Difference Equation for the Heckman-Opdam Hypergeometric Function and its confluent Whittaker limit}

\author{J.F.  van Diejen}

\address{
Instituto de Matem\'atica y F\'{\i}sica, Universidad de Talca,
Casilla 747, Talca, Chile}

\email{diejen@inst-mat.utalca.cl}

\author{E. Emsiz}

\address{
Facultad de Matem\'aticas, Pontificia Universidad Cat\'olica de Chile,
Casilla 306, Correo 22, Santiago, Chile}
\email{eemsiz@mat.puc.cl}

\subjclass[2000]{33C67, 33C52}
\keywords{hypergeometric functions, Whittaker functions, root systems, bispectral problem, rational Ruijsenaars-Schneider system, hyperbolic Calogero-Moser model, open quantum Toda chain}

\thanks{This work was supported in part by the {\em Fondo Nacional de Desarrollo
Cient\'{\i}fico y Tecnol\'ogico (FONDECYT)} Grants \# 1130226 and  \# 1141114.}

\date{August 2015}

\begin{abstract}
We present an explicit difference equation for the Heckman-Opdam hypergeometric function associated with root systems. Via a confluent hypergeometric limit, an analogous difference equation is obtained for the class-one Whittaker function diagonalizing the open quantum Toda chain.
\end{abstract}

\maketitle

\section{Introduction}\label{sec1}
It is well-known that the action-angle transformations linearizing the hyperbolic Calogero-Moser model and the rational Ruijsenaars-Schneider system are inverses of each other
\cite{rui:action-angle,feh-kli:duality,pus:hyperbolic,feh-gor:duality}. At the quantum level \cite{ols-per:quantum,rui:complete,die:integrability}, this remarkable reciprocity between the two integrable particle models manifests itself as a bispectral duality in the sense of
Duistermaat and Gr\"unbaum \cite{dui-gru:differential,gru:bispectral}: viewed as a function of the spectral variable,
the eigenfunction kernel of the hyperbolic Calogero-Moser Hamiltonian solves the eigenvalue equation
for the rational Ruijsenaars-Schneider Hamiltonian
\cite{rui:finite-dimensional,cha:duality,cha:bispectrality}. In the framework of affine Hecke algebras, both quantum systems are simultaneously recovered by degeneration   from the respective centers of two affine Hecke algebras that sit inside the double affine Hecke algebra \cite{che:inverse}.  The upshot is that the Heckman-Opdam hypergeometric function diagonalizing the hyperbolic Calogero-Moser system 
\cite{hec-sch:harmonic,hec:dunkl,opd:harmonic,opd:lecture} satisfies a system of difference equations in the spectral variable stemming from the commuting quantum integrals of the
rational Ruijsenaars-Schneider system \cite{che:inverse,cha:bispectrality}. 

The present work addresses the question of computing such difference equations in explicit form for arbitrary root systems, beyond the known simplest examples associated with the (quasi-)minuscule weights. Rather than facing the challenging task of calculating the difference equations at issue from a representation of the (degenerate) double affine Hecke algebra, we exploit the well-known fact that for discrete spectral values on a (translated) cone of dominant weights the Heckman-Opdam hypergeometric function truncates in terms of Heckman-Opdam Jacobi polynomials
\cite{hec-sch:harmonic}. This permits us to derive/prove the desired difference equations in two steps: first for the discrete spectral values by performing
a $q\to 1$ degeneration of a recently found Pieri formula for the Macdonald polynomials \cite{die-ems:generalized},
and then for arbitrary spectral values upon invoking an analytic continuation argument borrowed from R\"osler \cite{ros:positive} (based on known
growth estimates for the Heckman-Opdam hypergeometric function \cite{opd:harmonic,sch:contributions,ros-koo-voi:limit} that enable one to apply Carlson's theorem \cite{tit:theory}).   For the root system of type $A$, a more direct way to recover hypergeometric difference equations straight from the Macdonald-Ruijsenaars $q$-difference operators was recently pointed out by Borodin and Gorin, who established that
 the pertinent Heckman-Opdam hypergeometric function itself can be retrieved as a limit of the Macdonald polynomials  \cite[App. B]{bor-gor:general}.

Moreover, it was shown by Shimeno \cite{shi:limit} (see also \cite{osh-shi:heckman-opdam}) that for reduced root systems of arbitrary type the class-one Whittaker function diagonalizing the open quantum Toda chain \cite{jac:fonctions,kos:quantization,has:whittaker,goo-wal:classical,sem:quantisation,bau-oco:exponential} arises in turn as a confluent hypergeometric degeneration of the Heckman-Opdam hypergeometric function. Confluences of this kind have their origin in a well-known limiting transition from the hyperbolic Calogero-Moser Hamiltonian to the Toda Hamiltonian that was observed independently by Sutherland and Ruijsenaars \cite{sut:introduction,rui:relativistic-toda} and in more general form by Inozemtsev \cite{ino:finite}.
At the level of the present difference equation, the degeneration in question manifests itself as a strong-coupling limit and gives rise to a corresponding difference equation for the Whittaker function. 
For the root system of type $A$ the resulting difference equation in the spectral variable, satisfied by the quantum Toda eigenfunction, was previously established both by means of the quantum inverse scattering method \cite{kar-leb:integral,skl:bispectrality,koz:aspects} and by means of an
integral representation for the Whittaker function \cite{bab:equations,bor-cor:macdonald}.  The  underlying bispectral  duality should again be viewed as the quantum counterpart of the remarkable fact that the respective action-angle transforms linearizing the open Toda chain and a strong-coupling limit of  rational Ruijsenaars-Schneider system are  each other's inverses \cite{rui:relativistic-toda,feh:action-angle}.

The material is organized as follows. After recalling some necessary preliminaries regarding the Heckman-Opdam hypergeometric function in Section \ref{sec2}, our difference equation is first stated for reduced root systems in Section \ref{sec3} and then proven in Section \ref{sec4}. We wrap up  in Sections \ref{sec5} and \ref{sec6}, by indicating briefly how our difference equation is to be adapted in the case of a nonreduced root system and by computing the degeneration of the difference equation pertaining to  the Toda-Whittaker level, respectively.

\section{Preliminaries}\label{sec2} 
Throughout,
the Heckman-Opdam hypergeometric function will be viewed here as a holomorphic interpolation function for the Heckman-Opdam Jacobi polynomials.
\subsection{Jacobi polynomials}
Let $W$, $Q$ and $P$ denote the Weyl group, the root lattice and the weight lattice, associated with 
an irreducible crystallographic root system $R$ spanning a real finite-dimensional Euclidean vector space $V$ with inner product $\langle \cdot ,\cdot\rangle$ \cite{bou:groupes}. Here the dual linear space will be identified with $V$ via the inner product, in particular, this means that $\alpha^\vee:=2\alpha/\langle\alpha ,\alpha\rangle\in V$ is taken as our definition of the coroot associated with $\alpha\in R$. We furthermore write $Q^+\subset Q$  for
the nonnegative semigroup generated by  a (fixed) choice of positive roots $R^+\subset R$, and  $P^+\subset P$ for the corresponding dual cone of dominant weights endowed with the dominance partial order $\leq $ (i.e. for $\mu,\lambda\in P^+$: $\mu\leq \lambda$ iff $\lambda-\mu\in Q^+$).

The normalized Heckman-Opdam Jacobi polynomials $P_\lambda$, $\lambda\in P^+$ are $W$-invariant polynomials on $V$ of the form
\cite{hec-sch:harmonic,opd:harmonic}
\begin{subequations}
\begin{equation}\label{Ja}
P_\lambda (x) = \sum_{\mu\in P^+,\, \mu \leq \lambda} c_{\lambda ,\mu} m_\mu (x) \qquad (c_{\lambda ,\mu}\in\mathbb{C})
\end{equation}
such that
\begin{equation}\label{Jb}
L P_\lambda =E (\rho_g+\lambda )P_\lambda\quad \text{with}\quad E (\xi) :=
\langle\xi ,\xi\rangle-\langle \rho_g,\rho_g\rangle ,
\end{equation}
\end{subequations}
with $P_\lambda (0)=1$. Here $m_\mu :=\sum_{\nu\in W\mu} e^\nu$ with $e^\nu (x):=e^{\langle \nu  ,x\rangle}$ ($\nu\in P$, $x\in V$), $L$ denotes the hypergeometric differential operator
\begin{equation}\label{L}
L:=\Delta + \sum_{\alpha \in R^+}  g_\alpha \Bigl( \frac{1+e^{-\alpha}}{1-e^{-\alpha}}\Bigr)\partial_\alpha 
\end{equation}
parametrized by root multiplicity parameters $g_\alpha \in (0,+\infty)$ with $g_{w\alpha}=g_\alpha$ for all $w\in W$, and
\begin{equation}
\rho_g:=\frac{1}{2}\sum_{\alpha\in R^+}g_\alpha \alpha .
\end{equation}
In $L$ \eqref{L} the operator $\Delta$ refers to the Laplacian on $V$  (i.e. $\Delta e^\nu=\langle \nu ,\nu \rangle e^\nu$) and $\partial_\alpha$ stands for the directional derivative along the root $\alpha$ (i.e. $\partial_\alpha e^\nu =\langle \nu,\alpha\rangle e^\nu$).

\begin{remark}
The leading coefficient of the Jacobi polynomial $P_\lambda(x)$ \eqref{Ja}, \eqref{Jb} normalized such that $P_\lambda (0)=1$ was computed explicitly by Opdam
\cite{opd:some,hec-sch:harmonic,opd:lecture}:
\begin{equation}
c_{\lambda,\lambda}=\prod_{\alpha\in R^+}\prod_{j=0}^{\langle \lambda, \alpha^\vee\rangle-1} \frac{\langle \rho_g, \alpha^\vee \rangle +\frac{1}{2}g_{\alpha/2}+j}{\langle \rho_g, \alpha^\vee \rangle+g_\alpha+\frac{1}{2}g_{\alpha/2}+j}
\end{equation}
(with the convention that $g_{\alpha/2}:=0$ if $\alpha/2\not\in R$). The remaining coefficients $c_{\lambda ,\mu}$, $\mu <\lambda$ can be computed in closed form via a linear recursion stemming from the eigenvalue equation \eqref{Jb} \cite[Thm. 4.4]{die-lap-mor:determinantal}.
\end{remark}

\subsection{Hypergeometric function}
Let us denote the complexification $V\otimes_\mathbb{R}\mathbb{C}$ of $V$ by $V_\mathbb{C}$, with the form $\langle \cdot ,\cdot\rangle$ and the action of $W$ extended by linearity. The Heckman-Opdam hypergeometric function
$F_\xi ( x)$ is a holomorphic  function of $\xi\in V_\mathbb{C}$ and $x\in \text{U} \subset V_\mathbb{C}$---where $\text{U}$ denotes a sufficiently small $W$-invariant tubular neighborhood
of $V$ in $V_\mathbb{C}$---satisfying the following properties \cite{hec-sch:harmonic}:
\begin{subequations}
\begin{equation}\label{Hb}
F_{\rho_g+\lambda }(x) =P_\lambda (x)\qquad (\text{for all} \ \lambda\in P^+, x\in U),
\end{equation}
\begin{equation}\label{Ha}
F_{w\xi}(x)=F_\xi (wx)=F_\xi (x)  \qquad  (\text{for all}\ \xi \in V_\mathbb{C}, x\in U, w\in W),
\end{equation}
and \cite{opd:harmonic,sch:contributions,ros-koo-voi:limit}
\begin{equation}\label{Hc}
| F_{\rho_g+\xi}(x)| \leq e^{\max_{w\in W} \text{Re} \langle w\xi ,x\rangle}
\qquad (\text{for all}\ \xi \in V_\mathbb{C}, x\in V).
\end{equation}
\end{subequations}

This hypergeometric function extends the $W$-invariant solution of the eigenvalue equation for the hypergeometric differential operator $L$ \eqref{L}
given by the normalized Jacobi polynomials, from the discrete spectral values $\xi$ in $\rho_g+P^+$ to general
values of the spectral variable  \cite{hec-sch:harmonic,hec:dunkl,opd:harmonic,opd:lecture}:
\begin{equation}\label{HDE}
L F_\xi = E (\xi ) F_\xi 
\quad\text{and}\quad F_\xi (0)=1
\qquad (\text{for all}\ \xi\in V_\mathbb{C}).
\end{equation}

\begin{remark}
It is clear from Section \ref{sec4} below that
the properties in Eqs. \eqref{Hb}--\eqref{Hc}---combined with  the analyticity in $\xi$ and $x$---in fact characterize $F_\xi (x)$ uniquely (by Carlson's theorem).
\end{remark}

\section{Hypergeometric difference equation}\label{sec3}
From now onwards except in Section \ref{sec5}, it will be assumed that $R$ is reduced unless explicitly insinuated otherwise.
To any  $\nu\in P$ we associate its stabilizer subgroup
$W_\nu:=\{ w\in W\mid w\nu =\nu\}$  generated by the root subsystem  $R_\nu:=\{\alpha\in R\mid \langle \nu ,\alpha^\vee \rangle =0\}$, and
the shortest group element  $w_\nu\in W$ such that $w_\nu (\nu)\in P^+$. 
For $\lambda\in P^+$, we denote by $P(\lambda)\subset P$ the saturated set with highest weight $\lambda$ given by
\begin{equation}
 P(\lambda ):=\bigcup_{\mu\in P^+, \mu\leq \lambda} W\mu .
\end{equation}

Our main result is the following explicit difference equation in the spectral variable for the Heckman-Opdam hypergeometric function associated with a reduced root system.
\begin{theorem}\label{HDE:thm}
For $R$ reduced and $\omega\in P^+$ small in the sense that $\langle \omega,\alpha^\vee\rangle\leq 2$ for all $\alpha\in R^+$,
the Heckman-Opdam hypergeometric function $F_\xi(x)$ satisfies the difference equation
\begin{subequations}
\begin{equation}\label{HDEa}
\sum_{\nu\in P(\omega)}
\sum_{\eta\in W_\nu(w_\nu^{-1}\omega)}
 U_{\nu,\eta }(\xi) V_{\nu }(\xi) F_{\xi+\nu }(x) =E_{\omega}(x) F_{\xi}(x)
\end{equation}
(as a holomorphic identity in $x\in U$ and $\xi\in V_\mathbb{C}$), with
\begin{eqnarray}\label{HDEb}
&& V_{\nu }(\xi ):=\prod_{\substack{\alpha\in R\\ \langle\nu,\alpha^\vee \rangle>0}} \frac{\langle \xi, \alpha^\vee \rangle+g_\alpha }{\langle \xi, \alpha^\vee \rangle}
      \prod_{\substack{\alpha\in R\\ \langle\nu, \alpha^\vee \rangle=2}} \frac{1+\langle \xi, \alpha^\vee  \rangle +g_\alpha }{1+ \langle \xi,\alpha^\vee \rangle} ,\\
&& U_{\nu,\eta } (\xi ):=\prod_{\substack{\alpha\in R_\nu \\ \langle\eta,\alpha^\vee\rangle>0}}
\frac{\langle \xi, \alpha^\vee  \rangle +g_\alpha }{\langle \xi,\alpha^\vee  \rangle}
\prod_{\substack{\alpha\in R_\nu \\ \langle\eta,\alpha^\vee\rangle=2}} 
 \frac{1+\langle \xi, \alpha^\vee \rangle -g_\alpha }{1+ \langle \xi, \alpha^\vee  \rangle} , \label{HDEc}
\end{eqnarray}
and
\begin{equation}\label{HDEd}
E_{\omega }(x):=\sum_{\mu\in P^+,\, \mu\leq \omega}  | W_\mu (\omega) | m_\mu (x)  ,
\end{equation}
\end{subequations}
where $  | W_\mu (\omega) | $ denotes the order of the orbit of $\omega$ with respect to $W_\mu$.
\end{theorem}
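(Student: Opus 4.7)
The plan is to establish the identity first on the dense discrete set of spectral values $\xi \in \rho_g + P^+$, where by \eqref{Hb} the hypergeometric function reduces to the Jacobi polynomial $P_\lambda(x)$, and then to extend it to general $\xi \in V_\mathbb{C}$ by analytic continuation. On this polynomial locus the claimed equation becomes a Pieri-type expansion of the product $E_\omega(x)\,P_\lambda(x)$ in the basis $\{P_\mu\}_{\mu \in P^+}$: for those $\nu \in P(\omega)$ for which $\lambda + \nu$ fails to be dominant, the value $F_{\rho_g+\lambda+\nu}(x)$ is folded back via the spectral $W$-invariance \eqref{Ha} to some $P_\mu(x)$ with $\mu \in P^+$ (or is killed by a zero of the coefficient), and the stated products $V_\nu(\rho_g+\lambda)\,U_{\nu,\eta}(\rho_g+\lambda)$ then play the role of Pieri structure constants.

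I would derive this Pieri formula by performing a $q \to 1$ degeneration of the Macdonald Pieri rule from \cite{die-ems:generalized}, which is indexed by the same root-theoretic combinatorial data: the stabilizer subgroup $W_\nu$ of a weight $\nu \in P(\omega)$, the orbit $W_\nu(w_\nu^{-1}\omega)$, and the values $\langle \eta, \alpha^\vee \rangle \in \{1,2\}$ entering \eqref{HDEb}--\eqref{HDEc}. Under the standard rescaling that sends the Macdonald multiplicity parameters to the Heckman-Opdam parameters $g_\alpha$, each $q$-Pochhammer building block of a Macdonald Pieri coefficient degenerates to a linear factor of the form $\langle \xi, \alpha^\vee\rangle + g_\alpha$ or one of its shifts, and these are to reassemble precisely into $U_{\nu,\eta}$ and $V_\nu$ evaluated at $\xi = \rho_g + \lambda$. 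Along the way one has to verify that the apparent poles along the hyperplanes $\langle \xi, \alpha^\vee\rangle = 0$ and $\langle \xi, \alpha^\vee\rangle = -1$ in \eqref{HDEb}--\eqref{HDEc} are spurious upon summation over the inner $W_\nu$-orbit, so that after clearing a common entire denominator both sides of \eqref{HDEa} are genuinely holomorphic in $\xi \in V_\mathbb{C}$.

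For the final analytic continuation step I would follow the argument of R\"osler \cite{ros:positive}: the exponential growth bound \eqref{Hc} allows an iterated coordinate-wise application of Carlson's theorem \cite{tit:theory} to conclude that two entire functions of $\xi$ that agree on the translated cone $\rho_g + P^+$ must coincide throughout $V_\mathbb{C}$. The main obstacle I anticipate is computational rather than conceptual: it is the careful matching of the $q \to 1$ limits of the Macdonald Pieri coefficients with the compact products in \eqref{HDEb}--\eqref{HDEc}, in particular the ``doubled'' factors associated with the cases $\langle \nu, \alpha^\vee\rangle = 2$ and $\langle \eta, \alpha^\vee\rangle = 2$, where several distinct boundary $q$-factors at the Macdonald level must conspire to collapse onto the same linear factors in the hypergeometric limit.
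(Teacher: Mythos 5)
Your proposal follows the paper's own three-step strategy essentially verbatim: the Pieri formula for the Jacobi polynomials obtained as a $q\to 1$ degeneration of the Macdonald Pieri rule of \cite{die-ems:generalized}, the residue analysis showing that the apparent poles of $V_\nu$ and $U_{\nu,\eta}$ cancel by Weyl-group symmetry, and the coordinate-wise application of Carlson's theorem using the growth bound \eqref{Hc}. The only point worth sharpening is that for $\lambda+\nu\notin P^+$ the correct mechanism is the one you mention parenthetically --- the coefficient $V_\nu(\rho_g+\lambda)$ vanishes outright --- rather than a fold-back of $F_{\rho_g+\lambda+\nu}$ onto a Jacobi polynomial, since $w(\rho_g+\lambda+\nu)$ need not lie in $\rho_g+P^+$.
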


If $\omega$ is minuscule (i.e.  $0\leq \langle \omega,\alpha^\vee\rangle\leq 1$ for all $\alpha\in R^+$), then $P(\omega)=W\omega$ and the above difference equation becomes of the form
\begin{equation}\label{min}
\sum_{\nu\in W\omega} V_{\nu }(\xi) F_{\xi+\nu }(x) =m_{\omega}(x) F_{\xi}(x).
\end{equation}
If $\omega$ is quasi-minuscule (i.e.  $\omega\in R^+$ and $0\leq \langle \omega,\alpha^\vee\rangle\leq 1 $ for all $\alpha\in R^+\setminus \{\omega\}$), then $P(\omega)=W\omega\cup \{ 0\}$ and our difference equation reads
\begin{equation}
\sum_{\nu\in W\omega} \bigl( V_{\nu }(\xi) F_{\xi+\nu }(x)  +U_{0,\nu }(\xi) F_{\xi}(x) \bigr) =\bigl( m_{\omega}(x) +m_\omega (0) \bigr) F_{\xi}(x) ,
\end{equation}
which is readily rewritten in the form
\begin{equation}\label{qmin}
\sum_{\nu\in W\omega}  V_{\nu }(\xi) \bigl( F_{\xi+\nu }(x) -F_{\xi}(x) \bigr) =\bigl( m_{\omega}(x) -m_\omega (0) \bigr) F_{\xi}(x),
\end{equation}
with the aid of the elementary identity $\frac{1}{2}\sum_{\nu\in W\omega}  \bigl( V_{\nu }(\xi)+U_{0,\nu }(\xi) \bigr) = m_\omega (0)$.
In these simplest examples the explicit difference equation for the Heckman-Opdam hypergeometric function was first established by Cherednik for $\omega $ minuscule using the degenerate double affine Hecke algebra (cf. \cite[Eq. (4.15)]{che:inverse}), and by Chalykh for $\omega$ either minuscule or quasi-minuscule with the aid of a Baker-Akhiezer function associated with $R$ (cf. \cite[Thm. 6.9]{cha:bispectrality}).

\begin{remark}
In the case of the classical series $A_n$, $B_n$, $C_n$ and $D_n$ all fundamental weights of $R$ are small in the sense specified in Theorem \ref{HDE:thm}, whereas for the exceptional root systems
$E_6$, $E_7$, $E_8$, $F_4$ and $G_2$ the number of such small fundamental weights of $R$ is given by $5$, $4$, $2$, $2$ and $1$, respectively.
By varying $\omega$ over these small fundamental weights, the Eqs. \eqref{HDEa}--\eqref{HDEd} produce an explicit system of independent difference equations for the Heckman-Opdam hypergeometric function $F_\xi (x)$.
\end{remark}

\begin{remark}\label{rm-reg}
For technical convenience, we have assumed most of the time that our root multiplicity parameters $g_\alpha$ take positive values. It is known, however, that $F_\xi (x)$ (with $(\xi,x)\in V_\mathbb{C}\times U$) extends holomorphically in the root multiplicity parameters to complex values outside the pole locus of an overall normalization factor of the form
\cite{hec-sch:harmonic,opd:lecture}
\begin{equation}
\prod_{\alpha\in R^+}  \frac{\Gamma (\langle \rho_g,\alpha^\vee\rangle+ \frac{1}{2}g_{\alpha/2}+g_\alpha )}{\Gamma (\langle \rho_g,\alpha^\vee\rangle  +\frac{1}{2}g_{\alpha/2})} 
\end{equation}
(where $\Gamma (\cdot)$ refers to the gamma function),
which comprises in particular the closed parameter domain $\text{Re}(\frac{1}{2}g_{\alpha/2}+g_\alpha ) \geq 0$.
The explicit difference equation in Theorem \ref{HDE:thm} thus extends holomorphically to such complex root multiplicities as well.
\end{remark}

\section{Proof of Theorem \ref{HDE:thm}}\label{sec4}
Our proof of Theorem \ref{HDE:thm} exploits (and makes precise sense of) the idea that the difference operator that enters via the LHS  of our difference equation---through its action on $F_\xi (x)$ in the spectral variable---amounts to a rational degeneration of an analogous difference operator diagonalized by the Macdonald polynomials \cite[Thm. 3.1]{die-ems:generalized}. The simplest examples in Eqs. \eqref{min} and \eqref{qmin}  correspond in this respect to the rational degenerations of the celebrated Macdonald difference operators associated with the (quasi-)minuscule weights \cite{mac:orthogonal}.

\subsection{Holomorphy in the spectral variable}\label{holomorphy:sec}
The holomorphy and $W$-invariance of $F_\xi (x)$ in the spectral variable ensures that the LHS of \eqref{HDEa} is actually holomorphic for $\xi\in V_\mathbb{C}$. Indeed, the generically simple poles of the coefficients $V_\nu (\xi)$ \eqref{HDEb} and $U_{\eta ,\nu}(\xi) $ \eqref{HDEc} at root hyperplanes $\langle \xi, \alpha^\vee\rangle=0$
and at the affine root hyperplanes $\langle \xi, \alpha^\vee\rangle+1=0$ ($\alpha\in R$) cancel out in the final expression on the LHS of Eq. \eqref{HDEa}  due to the Weyl group symmetry. For the root hyperplanes this is obvious (from the reflection-symmetry) and for the affine root hyperplanes this follows from a detailed residue analysis performed (at the trigonometric level) in the appendix of Ref. \cite{die-ems:generalized}.

\subsection{Pieri formula}
At the spectral value $\xi=\rho_g+\lambda$ with $\lambda\in P^+$, the difference equation in Theorem \ref{HDE:thm} gives rise to a Pieri formula for the Jacobi polynomials by virtue of the specialization formula in Eq. \eqref{Hb}:
\begin{equation}\label{pieri}
E_{\omega}(x) P_{\lambda}(x)=
\sum_{\substack{\nu\in P(\omega)\\ \lambda+\nu\in P^+}}
\sum_{\eta\in W_\nu(w_\nu^{-1}\omega)}
 U_{\nu,\eta }(\rho_g +\lambda) V_{\nu }(\rho_g+\lambda ) P_{\lambda+\nu }(x) ,
\end{equation}
first for generic positive root multiplicity parameters $g_\alpha$ such that $\langle \rho_g,\alpha^\vee\rangle \neq 1$ ($\forall\alpha\in R$) and then for general positive root multiplicity values by continuity (cf. also Remark \ref{rm-reg} above).
The restriction of the first sum on the RHS of Eq. \eqref{pieri}---to those $\nu\in P(\omega)$ for which $\lambda+\nu$ is dominant---stems from the observation that the factor $V_\nu(\xi )$ vanishes at $\xi=\rho_g +\lambda$ if $\lambda+\nu\not\in P^+$.
Indeed, in such cases there exists a simple root $\beta$ such that $\langle\lambda+\nu,\beta^\vee\rangle <0$, i.e. either
$\langle\lambda ,\beta^\vee\rangle =0$ and $\langle\nu,\beta^\vee\rangle <0$ so $\langle\xi,\alpha^\vee\rangle +g_\alpha=0$ at $\xi=\rho_g+\lambda$
for $\alpha=-\beta$, or $\langle\lambda,\beta^\vee\rangle =1$ and $\langle\nu,\beta^\vee\rangle =-2$ so 
$1+\langle\xi,\alpha^\vee\rangle +g_\alpha=0$ at $\xi=\rho_g+\lambda$ for $\alpha =-\beta$. (Here we have exploited that $|\langle \nu,\alpha^\vee\rangle|\leq 2$ for all $\alpha\in R$ as well as the elementary fact that
$\langle\rho_g,\beta^\vee\rangle =g_\beta$ for $\beta$ simple.)

It is well-known that the Jacobi polynomials may be viewed as a $q\to 1$ degeneration of the more general Macdonald polynomials \cite[\S 11]{mac:orthogonal}. The Pieri formula in Eq. \eqref{pieri} arises in this context as the corresponding (rational) degeneration of the recently found Pieri formula for the Macdonald polynomials in \cite[Sec. 4]{die-ems:generalized}. Specifically, upon substituting
$t_\alpha=q_\alpha^{g_\alpha}$ in \cite[Thm. 4.1]{die-ems:generalized} and performing the limit $q\to 1$, Eq. \eqref{pieri} readily follows.

\subsection{Analytic continuation}
Our verification of the Pieri formula \eqref{pieri} proves Theorem \ref{HDE:thm} for $\xi\in \rho_g+P^+$. The extension to arbitrary spectral values $\xi\in V_\mathbb{C}$ is achieved through analytic continuation.
To this end we will follow a line of arguments that was recently employed successfully in the context of the Heckman-Opdam hypergeometric function by R\"osler et al, and which
hinges on the following
classical result from complex analysis due to Carlson, cf. \cite[Thm. 5.81]{tit:theory}.

\noindent {\bf Carlson's theorem.}
\emph{Let $f$ be an holomorphic function in a neighborhood of the closed right half-plane $$\mathbb{H}_r:=\{ z\in \mathbb{C}\mid \text{Re} (z)\geq 0\} ,$$ such that (i) $ f$  is bounded\footnote{In full generality Carlson's theorem allows for $f$ to have exponential growth of order $O(e^{c|z|})$ with $c<\pi$, but for our purposes the assumption of  a bounded function  suffices.}
on $\mathbb{H}_r$ and (ii) $f(m)=0$ for $m=0,1,2,\ldots$, then $f$ vanishes identically}.

See e.g. \cite[Sec. 4]{ros:positive}  for the proof of a product formula for the Heckman-Opdam hypergeometric function of type $BC$ based on Carlson's theorem and also \cite[Sec. 5]{ros-koo-voi:limit} for its use in the proof of a  limit transition between the type $BC$ and type $A$  Heckman-Opdam hypergeometric functions.
An earlier (and somewhat different) application of Carlson's theorem in the framework of the analysis on root systems can be found in Opdam's proof of the $q=1$ Macdonald constant term formulas (where it was used to perform analytic continuation with respect to the root multiplicity parameters) \cite{opd:some}.

Specifically, since the two sides of Eq. \eqref{HDEa} are $W$-invariant  and holomorphic both in $x\in U$ and in $\xi\in V_\mathbb{C}$, it is sufficient to prove the desired equality for
$x$ in the (closed) fundamental chamber $C:=\{x\in V \mid \langle x,\alpha^\vee\rangle\geq 0, \forall\alpha\in R^+\}$
and for
\begin{equation}\label{domain}
\xi=\rho_g+z_1\omega_1+\cdots+z_n\omega_n
\end{equation} with $z_j\in \mathbb{H}_r$ ($j=1,\ldots, n$). Here $\omega_1,\ldots ,\omega_n$ refers to the basis of the fundamental weights (so $n=\dim (V)=\text{rank}(R)$).
Upon division by $e^{\langle \xi ,x\rangle}$ and temporarily
assuming that $g_\alpha >1$ (so $\langle\rho_g,\alpha^\vee\rangle>1$ for all $\alpha\in R^+$),  both sides of the difference equation remain bounded in $z_j\in \mathbb{H}_r$, $j=1,\dots ,n$. 
Indeed, the estimate in  Eq. \eqref{Hc} reveals that for the variables of interest
$$ |e^{-\langle \xi ,x\rangle} F_{\xi +\nu} (x)| \leq    e^{\langle w_\nu(\nu)-\rho_g,x\rangle  } , $$
where we have used that
$\langle w\nu,x\rangle\leq \langle w_\nu (\nu),x\rangle$ and 
$\text{Re} \langle w(\xi-\rho_g),x\rangle \leq \text{Re}\langle \xi-\rho_g,x\rangle$ for all $w\in W$ (since both $x$ and $\text{Re} (\xi ) -\rho_g$ now belong to the fundamental chamber $C$).
Furthermore, the factors  of the form $\frac{\langle \xi ,\alpha^\vee\rangle + g_\alpha}{\langle \xi ,\alpha^\vee\rangle}$ and $\frac{\langle \xi ,\alpha^\vee\rangle +1\pm g_\alpha}{1+\langle \xi ,\alpha^\vee\rangle}$ constituting $V_\nu (\xi)$ \eqref{HDEb} and $U_{\nu,\eta} (\xi)$ \eqref{HDEc} remain bounded  on this variable domain as well (because for $g_\alpha >1$
 we stay away from the (affine) root hyperplanes where the denominators vanish and both quotients moreover tend to $1$ if $\langle\xi,\alpha^\vee\rangle \to\infty$).
When the coordinates $z_1,\ldots ,z_n$ all take nonnegative integral values the spectral variable $\xi$ \eqref{domain} belongs to the shifted dominant cone $\rho_g+P^+$ and the equality of both sides of Eq. \eqref{HDEa} is then guaranteed by the Pieri formula as argued above. We will now use Carlson's theorem to compare both sides of the difference equation outside this discrete spectral set. Indeed, successive extension of the  coordinate values $z_1,\ldots ,z_n$ to the complex half-space $\mathbb{H}_r$
by means of Carlson's theorem entails the desired equality for $\text{Re}(\xi) \in\rho_g + C$, $x\in C$ and $g_\alpha >1$.
As anticipated, the extension of our difference equation to  the full domain $\xi\in V_\mathbb{C}$, $x\in U$ and $g_\alpha >0$ is now immediate from the Weyl group symmetry  and the analyticity (cf. also Remark \ref{rm-reg} above).

\section{Hypergeometric difference  equations \\ for nonreduced root systems}\label{sec5}
In the standard euclidean realization of the nonreduced root system of rank $n$ \cite{bou:groupes}, the
hypergeometric differential equation in Eq.  \eqref{HDE} becomes of the form
\begin{subequations}
\begin{equation}
L F_{\xi} =(\xi_1^2+\cdots +\xi_n^2 - \rho_1^2-\cdots -\rho_n^2)F_{\xi} ,
\end{equation}
where $\rho_j=(n-j)g+\frac{1}{2}g_1+g_2$ ($j=1,\ldots,n$) and
\begin{align}
L&= \sum_{1\leq j\leq n} \left[ \frac{\partial^2}{\partial x_j^2}+
 \Bigl(g_1 \coth \frac{1}{2}  (x_j)  + 2g_2 \coth  (x_j)\Bigr) \frac{\partial}{\partial x_j} \right] +\\
&g \sum_{1\leq j<k\leq n}  \left[  \coth \frac{1}{2}  (x_j+x_k)  \Bigl( \frac{\partial}{\partial x_j}+ \frac{\partial}{\partial x_j}\Bigr)+\coth \frac{1}{2}  (x_j-x_k)  \Bigl( \frac{\partial}{\partial x_j}- \frac{\partial}{\partial x_j}\Bigr) \right] \nonumber .
\end{align}
\end{subequations}
Here $x=(x_1,\ldots ,x_n)$, $\xi=(\xi_1,\ldots ,\xi_n)$  and  $\rho_g=(\rho_1,\ldots ,\rho_n)$ are being represented in the standard orthogonal basis $e_1,\ldots ,e_n$ of $V$.
Rather than to incorporate the nonreduced setting  into Theorem \ref{HDE:thm}, it is actually more convenient to tweak the structure of our difference equation a bit in this case, as it will give rise to somewhat simpler formulas. 

\begin{theorem}
For $R$ nonreduced of rank $n$ and $\ell\in \{ 1,\ldots ,n\}$, the Heckman-Opdam hypergeometric function $F_\xi(x)$ satisfies the difference equation
\begin{subequations}
{\small
\begin{equation}\label{HDEnra}
\sum_{\substack{J\subset \{ 1,\ldots ,n\} ,\, 0\leq|J|\leq \ell\\
               \varepsilon_j=\pm 1,\; j\in J}}
\!\!\!\!\!\!\!\!\!
U_{J^c,\, \ell -|J|}(\xi)
V_{\varepsilon J}(\xi)
F_{\xi +e_{\varepsilon J}} (x) =E_\ell (x) F_{\xi} (x)
\end{equation}}
(as a holomorphic identity in $x\in U$ and $\xi \in V_\mathbb{C}$), with
{\small
\begin{align}
V_{\varepsilon J}(\xi )&:=
\prod_{j\in J} 
\frac{(\varepsilon_j\xi_j+\frac{1}{2}g_1+g_2)(1+2\varepsilon_j\xi_j+g_1)}{\varepsilon_j\xi_j(1+2\varepsilon_j\xi_j)}
\prod_{\substack{j\in J\\ k\not\in J}} 
\Bigl(\frac{\varepsilon_j\xi_j+\xi_{k}+g}{\varepsilon_j\xi_j+\xi_{k}}\Bigr)\Bigl(\frac{\varepsilon_j\xi_j-\xi_{k}+g}{\varepsilon_j\xi_j-\xi_{k}}\Bigr)
\nonumber \\
&  \times
\prod_{\substack{j,j^\prime \in J\\ j<j^\prime}}
\Bigl(\frac{\varepsilon_j\xi_j+\varepsilon_{j^\prime}\xi_{j^\prime}+g}{\varepsilon_j\xi_j+\varepsilon_{j^\prime}\xi_{j^\prime}}\Bigr)
\Bigl(\frac{1+\varepsilon_j\xi_j+\varepsilon_{j^\prime}\xi_{j^\prime}+g}{1+\varepsilon_j\xi_j+\varepsilon_{j^\prime}\xi_{j^\prime}}\Bigr) ,
\end{align}
\begin{align}
U_{K,p}(\xi):=
 (-1)^p
\sum_{\stackrel{I\subset K,\, |I|=p}
               {\varepsilon_i =\pm 1,\; i\in I }}
&\Biggl( \prod_{i\in I} 
\frac{(\varepsilon_i\xi_i+\frac{1}{2}g_1+g_2)(1+2\varepsilon_i\xi_i+g_1)}{\varepsilon_i\xi_i(1+2\varepsilon_i\xi_i)} \nonumber \\
&\times \prod_{\substack{i\in I\\ k\in K\setminus I}} 
\Bigl(\frac{\varepsilon_i\xi_i+\xi_{k}+g}{\varepsilon_i\xi_i+\xi_{k}}\Bigr)\Bigl(\frac{\varepsilon_i\xi_i-\xi_{k}+g}{\varepsilon_i\xi_i-\xi_{k}}\Bigr)
\nonumber \\
&  \times
\prod_{\substack{i,i^\prime \in I\\ i<i^\prime}}
\Bigl(\frac{\varepsilon_i\xi_i+\varepsilon_{i^\prime}\xi_{i^\prime}+g}{\varepsilon_i\xi_i+\varepsilon_{i^\prime}\xi_{i^\prime}}\Bigr)
\Bigl(\frac{1+\varepsilon_i\xi_i+\varepsilon_{i^\prime}\xi_{i^\prime}-g}{1+\varepsilon_i\xi_i+\varepsilon_{i^\prime}\xi_{i^\prime}}\Bigr) \Biggr) ,\end{align}}
 and
{\small
\begin{equation}\label{HDEnrd}
E_\ell (x):=4^{\ell }\sum_{\substack{J\subset \{ 1,\ldots, n\} \\ |J|=\ell}}   \prod_{j\in J}  \sinh^2\left(\frac{x_j}{2}\right) ,
\end{equation}}
\end{subequations}
where $e_{\varepsilon J} := \sum_{j\in J} \varepsilon_j e_j $, $|J|$ denotes the cardinality of $J\subset\{ 1,\ldots, n\}$, and $J^c:=\{ 1,\ldots, n\}\setminus J$.
\end{theorem}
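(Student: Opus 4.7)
The plan is to follow the same three-stage strategy used to establish Theorem \ref{HDE:thm}: first prove the identity for a dominant shifted spectral variable $\xi=\rho_g+\lambda$ by recognizing it as a Pieri-type rule for the $BC_n$ Heckman-Opdam Jacobi polynomials, then obtain that Pieri rule as a confluent $q\to 1$ degeneration of a Koornwinder-Macdonald Pieri formula, and finally propagate the equation to arbitrary $\xi\in V_\mathbb{C}$ by Carlson's theorem. The fundamental weight $\omega_\ell=e_1+\cdots+e_\ell$ is small in the sense of Theorem \ref{HDE:thm}, and its saturated set $P(\omega_\ell)$ decomposes as the disjoint union of $W$-orbits $W\omega_k$ with $k=0,1,\ldots,\ell$; under this decomposition the sums $\nu\in P(\omega)$ and $\eta\in W_\nu(w_\nu^{-1}\omega)$ translate into the double sum over subsets $J\subset\{1,\ldots,n\}$ (with signs $\varepsilon_j=\pm 1$) and the inner sum packaged in $U_{J^c,\ell-|J|}(\xi)$. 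The right-hand side $E_\ell(x)=4^\ell\sum_{|J|=\ell}\prod_{j\in J}\sinh^2(x_j/2)$ is just the expansion of the generating polynomial $E_{\omega_\ell}(x)$ of Theorem \ref{HDE:thm} in the standard realization of $BC_n$.

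First I would specialize $\xi=\rho_g+\lambda$ for $\lambda\in P^+$ to rewrite the claimed difference equation as a Pieri formula
\begin{equation*}
E_\ell(x) P_\lambda(x)=\sum_{\substack{J,\varepsilon\\ \lambda+e_{\varepsilon J}\in P^+}}
U_{J^c,\ell-|J|}(\rho_g+\lambda)\, V_{\varepsilon J}(\rho_g+\lambda)\, P_{\lambda+e_{\varepsilon J}}(x),
\end{equation*}
where the restriction to $\lambda+e_{\varepsilon J}\in P^+$ is automatic since $V_{\varepsilon J}(\rho_g+\lambda)$ vanishes at a simple wall whenever $\lambda+e_{\varepsilon J}$ exits the dominant cone (by the same case analysis as in Section \ref{sec4}, using $\langle\rho_g,\beta^\vee\rangle=g_\beta$ for $\beta$ simple and tracking which factors $\varepsilon_j\xi_j+\tfrac{1}{2}g_1+g_2$ or $1+2\varepsilon_j\xi_j+g_1$ degenerate to zero). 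This rational Pieri formula in turn is the $q\to 1$ limit, under $t_\alpha=q_\alpha^{g_\alpha}$, of the Koornwinder Pieri formula of \cite[Thm.~4.1]{die-ems:generalized}; the limit procedure is identical in shape to the reduced case, with the extra parameters $g_1,g_2$ coming from the middle and short $BC_n$ roots tracked through $t_1=q^{g_1},\ t_2=q^{g_2}$.

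Next I would verify holomorphy of the LHS of \eqref{HDEnra} in $\xi$: the apparent simple poles of $V_{\varepsilon J}$ and $U_{K,p}$ on the root hyperplanes $\varepsilon_j\xi_j\pm\xi_k=0$, $\varepsilon_j\xi_j+\varepsilon_{j'}\xi_{j'}=0$ and on the affine hyperplanes $1+2\varepsilon_j\xi_j=0$, $1+\varepsilon_j\xi_j+\varepsilon_{j'}\xi_{j'}=0$ cancel in the summed expression after invoking the Weyl symmetry of $F_\xi(x)$, precisely as in Subsection \ref{holomorphy:sec} (the affine cancellation again follows from the residue computation of the appendix of \cite{die-ems:generalized} specialized to the $BC_n$ configuration). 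Writing $\xi=\rho_g+z_1\omega_1+\cdots+z_n\omega_n$, dividing by $e^{\langle\xi,x\rangle}$, and temporarily assuming all multiplicities exceed one, the estimate \eqref{Hc} bounds $e^{-\langle\xi,x\rangle}F_{\xi+e_{\varepsilon J}}(x)$ uniformly for $(z_1,\ldots,z_n)\in\mathbb{H}_r^n$ and $x\in C$, while the coefficient quotients stay bounded since the $g_i$-shift keeps us at positive distance from each denominator. Since equality already holds on $(z_1,\ldots,z_n)\in\mathbb{Z}_{\geq 0}^n$ by the Pieri formula, successive applications of Carlson's theorem in each $z_j$ yield the identity on $\mathbb{H}_r^n$, and then Weyl invariance, analyticity in $x$, and analyticity in the multiplicities (Remark \ref{rm-reg}) deliver the full statement.

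The main obstacle I foresee is bookkeeping rather than conceptual: one must carefully match the combinatorial structure of \cite[Thm.~4.1]{die-ems:generalized}, which is phrased in terms of signed subsets and a parameter $p$ counting "vanishing" moves, against the splitting of $P(\omega_\ell)$ into the orbits of $\omega_0,\omega_1,\ldots,\omega_\ell$, and verify that the two Pieri formulas agree coefficient by coefficient in the $q\to 1$ limit, including the overall sign $(-1)^p$ in $U_{K,p}$ and the modified factor $1+\varepsilon_i\xi_i+\varepsilon_{i'}\xi_{i'}-g$ (rather than $+g$) that distinguishes the $U$-factors from the $V$-factors. Once this combinatorial identification is made, the Carlson argument proceeds verbatim as in Section \ref{sec4}.
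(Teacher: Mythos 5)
Your three-stage skeleton (Pieri formula at the shifted dominant spectral values, holomorphy of the left-hand side via residue cancellation, then Carlson's theorem in the coordinates $z_1,\ldots,z_n$) is exactly the structure of the paper's proof, and the Carlson step you describe does go through verbatim as in Section \ref{sec4}. Where you diverge is in how the two nontrivial inputs are sourced, and here your plan has a soft spot. For the Pieri formula you propose a $q\to 1$ degeneration of ``the Koornwinder Pieri formula of \cite[Thm.~4.1]{die-ems:generalized}''; but that theorem is the Pieri formula for Macdonald polynomials attached to \emph{reduced} root systems, so it does not supply the hyperoctahedral/Koornwinder input you need, and simply reading the $BC_n$ data into it is not justified (indeed the paper explicitly tweaks the structure of the difference equation in the nonreduced case rather than specializing Theorem \ref{HDE:thm}). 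The paper sidesteps this by quoting an already established Pieri formula for the hyperoctahedral-symmetric Heckman--Opdam Jacobi polynomials, \cite[Thm.~6.4]{die:properties}, which lives directly at $q=1$, so no degeneration step is needed at all. Similarly, the residue analysis guaranteeing holomorphy of the left-hand side of \eqref{HDEnra} is taken from the trigonometric computation in \cite{die:diagonalization}, not from the appendix of \cite{die-ems:generalized}, which again only covers the reduced configuration. So your argument is correct in outline, but to make stage one airtight you must either locate (or prove) a genuine Koornwinder--Macdonald Pieri rule of the required shape and carry out the $q\to 1$ bookkeeping you acknowledge, or, more economically, replace that entire step by the citation to \cite[Thm.~6.4]{die:properties}.
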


\begin{proof}
The theorem follows by a straightforward modification of the three-stage proof in Section \ref{sec4}.
(i) The residue analysis to infer that the LHS of our difference equation is holomorphic in $\xi$ was carried out (at the trigonometric level) in \text{Ref.}
\cite{die:diagonalization}.
(ii) For $\xi_j=\rho_j+\lambda_j$ ($j=1,\ldots, n$),  with $\lambda_1,\ldots,\lambda_n$ being nonnegative integers such that $\lambda_1\geq\lambda_2\geq \cdots\geq\lambda_n\geq 0$, the stated difference equation reduces to a Pieri formula for the hyperoctahedral-symmetric Heckman-Opdam Jacobi polynomials that was derived in
 \cite[Thm 6.4]{die:properties}. 
 (iii) The extrapolation to general spectral values
 hinges again on Carlson's theorem, following closely the arguments in Section \ref{sec4}.
 \end{proof}

By varying $\ell$ from $1,\ldots ,n$, Eqs. \eqref{HDEnra}--\eqref{HDEnrd}
reveal that the Heckman-Opdam hypergeometric function associated with a nonreduced root system constitutes a joint eigenfunction
for the commuting quantum integrals of a rational Ruijsenaars-Schneider type system with hyperoctahedral symmetry introduced in
\cite{die:integrability} (see also \cite{die:difference}). 
For $\ell =1$, the difference equation at issue becomes
\begin{subequations}
\begin{align}\label{J1a}
\sum_{1\leq j\leq n}  V_j(\xi) \Bigl( F_{\xi +e_j}(x)-F_\xi (x)\Bigl)+V_j(-\xi) \Bigl( F_{\xi -e_j}(x)-F_\xi (x)\Bigl) & \\
 = 4\Bigl( \sinh^2\left(\frac{x_1}{2}\right)+\cdots +\sinh^2\left(\frac{x_n}{2}\right)  \Bigr) F_\xi (x) ,& \nonumber
\end{align}
where
\begin{equation}\label{J1b}
V_j(\xi)=\frac{(\xi_j+\frac{1}{2}g_1+g_2)(1+2\xi_j+g_1)}{\xi_j(1+2\xi_j)}
\prod_{\substack{1\leq k\leq n\\ k\neq j}}\Bigl(\frac{\xi_j+\xi_{k}+g}{\xi_j+\xi_{k}}\Bigr)\Bigl(\frac{\xi_j-\xi_{k}+g}{\xi_j-\xi_{k}}\Bigr) .
\end{equation}
\end{subequations}
This particular difference equation for $F_\xi(x)$ when $R$ is nonreduced was first established by Chalykh with the aid of a Baker-Akhiezer function associated with the root system of interest
\cite[Thm. 6.12]{cha:bispectrality}.

\begin{remark}
For $n=1$, the difference equation in Eqs. \eqref{J1a}, \eqref{J1b} boils down to the following elementary difference equation
\begin{subequations}
\begin{align}
&\frac{(\xi+\frac{1}{2}g_1+g_2)(1+2\xi+g_1)}{\xi (1+2\xi)} \Bigl( F_{\xi +1}(x) - F_\xi (x)\Bigr) +\nonumber\\
&\frac{(\xi-\frac{1}{2}g_1-g_2)(-1+2\xi-g_1)}{\xi (-1+2\xi)} \Bigl( F_{\xi -1}(x)-F_\xi (x)\Bigr) =
4 \sinh^2\left(\frac{x}{2}\right)  F_\xi (x) \label{de}
\end{align}
for the Gauss hypergeometric function (a.k.a. Jacobi function)
\begin{equation} \label{gf}
F_\xi (x) =  {}_2F_1
\left(
\begin{matrix}
-\xi +\frac{g_1}{2}+g_2, \, \xi +\frac{g_1}{2}+g_2\\ 
\frac{1}{2}+g_1+g_2 \end{matrix} \; ; \; -\sinh^2 \left(\frac{ x}{2}\right) \right) ,
\end{equation}
\end{subequations}
cf. \cite[p. 185]{rui:finite-dimensional} and \cite[Sec. 2]{rui:relativistic}. Notice that for $\xi=\frac{g_1}{2}+g_2+l$ ($l =0,1,2,\ldots$), this recovers precisely the well-known three-term recurrence relation
\begin{subequations}
\begin{align}
 \sinh^2 \left( \frac{x}{2}\right) \: P_l (x) = &
  \frac{(l+g_1+2g_2) (\frac{1}{2}+l+g_1+g_2)}
     {(2l+g_1+2g_2) (1+2l+g_1+2g_2)}
\left( P_{l+1}(x)-P_l(x) \right)  \nonumber \\
 + &\frac{l (-\frac{1}{2}+l+g_2)}
        {(2l+g_1+2g_2) (-1+2l+g_1+2g_2)}
\left( P_{l-1}(x)-P_l(x) \right) \label{rr}
\end{align}
for the normalized Jacobi polynomials
\begin{equation}\label{jp}
P_l(x) =
{}_2F_1
\left(
\begin{matrix}
-l,\; l+g_1+2g_2 \\ 
\frac{1}{2}+g_1+g_2 \end{matrix} \; ; \; -\sinh^2 \left( \frac{x}{2}\right) \right) \qquad (l=0,1,2,\ldots ),
\end{equation}
\end{subequations}
cf. e.g. \cite[Ch. 9.8]{koe-les-swa:hypergeometric}. In this rank-one situation, Carlson's theorem thus tells us that---reversely---the difference equation \eqref{de} for the Gauss hypergeometric function \eqref{gf} can be retrieved by analytic continuation from the recurrence relation
\eqref{rr} for the Jacobi polynomials \eqref{jp}. Indeed, both sides of Eq. \eqref{de} are manifestly entire in $\xi$ and holomorphic in $g_1$, $g_2$ provided $\frac{1}{2}+g_1+g_2\not\in \{ 0,-1,-2,\ldots\}$, moreover,
upon division by $e^{\xi |x|}$, the expressions
remain bounded for
$\xi=\frac{g_1}{2}+g_2+z$ on the domain $\text{Re}(z)\geq 0$ if $\frac{g_1}{2}+g_2>\frac{1}{2}$ (cf. \cite[Lem. 11]{fle:paley}).
\end{remark}

\section{Confluent hypergeometric limit of Toda-Whittaker type}\label{sec6}
Let us return to our main setting of a reduced root system $R$. The class-one Whittaker function
$\bar{F}_\xi(x)$ \cite{jac:fonctions,kos:quantization,has:whittaker,bau-oco:exponential} is a confluent hypergeometric function
diagonalizing the quantum Hamiltonian of the open Toda chain
\begin{equation}\label{toda-ep}
\bar{L} \bar{F}_\xi(x) =\langle \xi,\xi\rangle \bar{F}_\xi(x)\quad\text{with}\quad \bar{L}:=\Delta -2 \sum_{\alpha\in S} e^{-\alpha} ,
\end{equation}
where $S\subset R^+$ denotes the basis of the simple roots. The function in question is real-analytic and of moderate exponential growth in $x\in V$, and it is holomorphic in the spectral variable $\xi\in V_\mathbb{C}$.
Following \cite[Sec. 4]{bau-oco:exponential}, we will normalize our Whittaker function such that it is $W$-invariant in the spectral variable
\begin{subequations}
\begin{equation}\label{winv}
\bar{F}_{w\xi}(x)=\bar{F}_\xi (x)\quad (\forall w\in W),
\end{equation}
and characterized by an asymptotics 
for $\text{Re}(\xi)$ in the open fundamental chamber $\text{Int}(C)=\{ x \in V \mid \langle x,\alpha^\vee\rangle >0,\forall\alpha\in R^+\}$ of the form
\begin{equation}\label{wasp}
\lim_{x\to + \infty}  e^{\langle w_0 \xi,x\rangle} \bar{F}_\xi (x)= \prod_{\alpha\in R^+}  \eta_\alpha^{-\langle \xi,\alpha^\vee\rangle } \, \Gamma (\langle \xi,\alpha^\vee\rangle ) \quad\text{with}\quad \eta_\alpha := \sqrt{ \frac{2}{\langle \alpha ,\alpha\rangle}}.
\end{equation}
\end{subequations}
Here $w_0$ refers to the longest element of $W$, and
by $x\to +\infty$ it is meant that $\langle x,\alpha^\vee\rangle\to +\infty$ for all $\alpha\in R^+$.

In \cite[Thm. 3]{shi:limit} it was shown that, 
for $\xi\in V_\mathbb{C}$ generic such that
\begin{equation}\label{generic}
 \langle 2\xi+\nu,\nu\rangle \neq 0 \qquad \forall \nu\in Q\setminus \{0\} 
\end{equation}
and upon parametrizing the root multiplicities $g_\alpha$ in terms of $t\in \mathbb{R}$ such that
\begin{equation}\label{par}
g_\alpha (t) (g_\alpha (t)-1)=\eta_\alpha^2 e^{t},
\end{equation}
one recovers the Whittaker function $\bar{F}_\xi (x)$ with $x\in \text{Int}(C)$ as a confluent limit of a suitably dressed Heckman-Opdam hypergeometric function (diagonalizing the hyperbolic Calogero-Moser Hamiltonian):
\begin{subequations}
\begin{equation}\label{lim1}
\bar{F}_\xi (x)=  \lim_{t\to +\infty}     N_t\,   \delta_t\, (x+t\rho^\vee)     F_{\xi } (x+t\rho^\vee ;t)\qquad (x\in \text{Int}(C)),
\end{equation}
where $F_{\xi} (x;t ):= F_\xi (x)$ with $g_\alpha=g_\alpha (t)$ given by Eq. \eqref{par}, 
\begin{equation}\label{lim2}
N_t:=\prod_{\alpha\in R^+}  \frac{\Gamma (\langle \rho_{g(t)},\alpha^\vee\rangle )\Gamma (g_\alpha (t) )} {\Gamma (\langle \rho_{g(t)},\alpha^\vee\rangle+g_\alpha (t))},\qquad
\delta_t := \prod_{\alpha\in R^+}  (e^{\alpha /2}-e^{-\alpha /2})^{g_\alpha (t)} ,
\end{equation}
\end{subequations}
and $\rho^\vee:=\frac{1}{2}\sum_{\alpha\in R^+} \alpha^\vee$.
The proof of this limiting relation in \cite{shi:limit}
is based on the connection formulas (i.e. $c$-function expansions) for $F_\xi (x)$ and $\bar{F}_\xi (x)$ \cite{hec-sch:harmonic,has:whittaker}.  
Indeed, first it is shown that a (dressed)
Harish-Chandra series solution of the
hypergeometric differential equation converges to that of the quantum Toda chain (i.e. to the fundamental Whittaker function).
The limiting relation then readily follows by comparing the explicit formulas for the respective $c$-functions in terms of gamma functions.

The combination of Theorem \ref{HDE:thm} and the limit in Eqs. \eqref{lim1}, \eqref{lim2} entails the following confluent hypergeometric difference equation for the class-one Whittaker function.
\begin{theorem}\label{CHDE:thm}
Let $R$ be reduced and $\omega\in P^+$ small such that $\langle \omega,\alpha^\vee\rangle\leq 2$ for all $\alpha\in R^+$.
The class-one Whittaker function $\bar{F}_\xi(x)$, determined by the quantum Toda eigenvalue equation \eqref{toda-ep} and the normalization \eqref{winv}, \eqref{wasp}, satisfies the difference equation
\begin{subequations}
\begin{equation}\label{CHDEa}
\sum_{\nu\in P(\omega)}
\sum_{\eta\in W_\nu(w_\nu^{-1}\omega)}
 \bar{U}_{\nu,\eta }(\xi) \bar{V}_{\nu }(\xi) \bar{F}_{\xi+\nu }(x) =\bar{E}_{\omega}(x) \bar{F}_{\xi}(x)
\end{equation}
(as a holomorphic identity in $\xi\in V_\mathbb{C}$ with $x\in V$), where
\begin{eqnarray}\label{CHDEb}
&& \bar{V}_{\nu }(\xi ):=\prod_{\substack{\alpha\in R\\ \langle\nu,\alpha^\vee \rangle>0}} \frac{\eta_\alpha}{\langle \xi, \alpha^\vee \rangle}
      \prod_{\substack{\alpha\in R\\ \langle\nu, \alpha^\vee \rangle=2}} \frac{\eta_\alpha }{1+ \langle \xi,\alpha^\vee \rangle} ,\\
&& \bar{U}_{\nu,\eta } (\xi ):=\prod_{\substack{\alpha\in R_\nu \\ \langle\eta,\alpha^\vee\rangle>0}}
\frac{\eta_\alpha}{\langle \xi,\alpha^\vee  \rangle}
\prod_{\substack{\alpha\in R_\nu \\ \langle\eta,\alpha^\vee\rangle=2}} 
 \frac{-\eta_\alpha }{1+ \langle \xi, \alpha^\vee  \rangle} , \label{CHDEc}
\end{eqnarray}
and
\begin{equation}\label{CHDEd}
\bar{E}_{\omega }(x):=e^\omega (x)=e^{\langle \omega ,x\rangle} .
\end{equation}
\end{subequations}
\end{theorem}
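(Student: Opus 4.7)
The strategy is to obtain Theorem~\ref{CHDE:thm} by substituting the $t$-dependent multiplicities $g_\alpha(t)$ of Eq.~\eqref{par} into the hypergeometric difference equation of Theorem~\ref{HDE:thm}, evaluating at the shifted argument $x+t\rho^\vee$, and passing to the limit $t\to+\infty$ through Shimeno's confluence relation \eqref{lim1}--\eqref{lim2}. After multiplying both sides of Eq.~\eqref{HDEa} by the renormalization factor $N_t\,\delta_t(x+t\rho^\vee)\,e^{-t\langle\omega,\rho^\vee\rangle}$, the finiteness of the index set $P(\omega)$ allows the limit to be exchanged with the sum.

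For the right-hand side, expand
\[
E_\omega(x+t\rho^\vee)=\sum_{\mu\le\omega}|W_\mu(\omega)|\sum_{\nu'\in W\mu}e^{\langle\nu',x\rangle}\,e^{t\langle\nu',\rho^\vee\rangle}.
\]
The pairing $\langle\cdot,\rho^\vee\rangle$ attains its maximum over $P(\omega)$ uniquely at $\nu'=\omega$, with coefficient $|W_\omega(\omega)|=1$, so the renormalized sum converges to $e^{\langle\omega,x\rangle}=\bar E_\omega(x)$; combined with Shimeno's limit $N_t\,\delta_t(x+t\rho^\vee)\,F_\xi(x+t\rho^\vee;t)\to\bar F_\xi(x)$, this yields $\bar E_\omega(x)\bar F_\xi(x)$. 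On the left-hand side, each summand contains the Shimeno-limited factor $N_t\,\delta_t(x+t\rho^\vee)\,F_{\xi+\nu}(x+t\rho^\vee;t)\to\bar F_{\xi+\nu}(x)$ (with $N_t,\delta_t$ independent of the spectral variable), so the task reduces to verifying
\[
\lim_{t\to+\infty}e^{-t\langle\omega,\rho^\vee\rangle}\,U_{\nu,\eta}(\xi)\,V_\nu(\xi)\big|_{g=g(t)}=\bar U_{\nu,\eta}(\xi)\,\bar V_\nu(\xi)
\]
for each $(\nu,\eta)$. The asymptotics $g_\alpha(t)\sim\eta_\alpha\,e^{t/2}$ (from Eq.~\eqref{par}) implies that each factor in $U_{\nu,\eta}V_\nu$ contributes either $\eta_\alpha e^{t/2}/\langle\xi,\alpha^\vee\rangle$, or $\pm\eta_\alpha e^{t/2}/(1+\langle\xi,\alpha^\vee\rangle)$ (with the minus sign precisely for the $\langle\eta,\alpha^\vee\rangle=2$ terms in $U_{\nu,\eta}$), thereby reproducing the numerators and signs of Eqs.~\eqref{CHDEb}--\eqref{CHDEc}.

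The key technical point, and the main obstacle, is the combinatorial balance: the total count of growing $g_\alpha(t)$ factors in $U_{\nu,\eta}V_\nu$ must equal $2\langle\omega,\rho^\vee\rangle$ for every $(\nu,\eta)$ appearing in the sum, so that dividing by $e^{t\langle\omega,\rho^\vee\rangle}$ produces a finite nonzero limit. A direct count yields a total of $2\langle w_\nu\nu,\rho^\vee\rangle+2\langle\eta^*,\rho^\vee_{R_\nu}\rangle$, where $\eta^*$ is the $W_\nu$-dominant conjugate of $\eta$ (obtained via the standard identity $\sum_{\alpha\in R^+}\langle\zeta,\alpha^\vee\rangle=2\langle\zeta,\rho^\vee\rangle$ applied to small dominant $\zeta$ within $R$ and within $R_\nu$). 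The required identity
\[
\langle w_\nu\nu,\rho^\vee\rangle+\langle\eta^*,\rho^\vee_{R_\nu}\rangle=\langle\omega,\rho^\vee\rangle
\]
is immediate for $\nu\in W\omega$ (then $w_\nu\nu=\omega$, $\eta^*=\nu$, and $\langle\nu,\rho^\vee_{R_\nu}\rangle=0$ since $\nu\perp R_\nu$), and in general follows from the decomposition $\rho^\vee=\rho^\vee_{R_\nu}+\tfrac12\sum_{\alpha\in R^+\setminus R_\nu^+}\alpha^\vee$ combined with the observation that $\eta^*\in W\omega$ shares its projection onto $\text{span}(R_\nu)$ with the $W$-dominant weight $\omega$, while the complementary component matches $w_\nu\nu$. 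Once the balance is secured, exchanging limit and finite sum delivers Eq.~\eqref{CHDEa} for $x\in\text{Int}(C)$ and $\xi$ satisfying the genericity condition \eqref{generic}, and the full statement follows by joint holomorphicity in $\xi\in V_\mathbb{C}$ and real-analyticity of $\bar F_\xi$ in $x\in V$.
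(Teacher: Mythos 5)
Your overall strategy coincides with the paper's proof: substitute the parametrization \eqref{par} into Theorem \ref{HDE:thm}, shift $x\mapsto x+t\rho^\vee$, renormalize by $e^{-t\langle\omega,\rho^\vee\rangle}N_t\,\delta_t(x+t\rho^\vee)$, invoke Shimeno's confluence \eqref{lim1}, and remove the restrictions on $x$ and $\xi$ at the end by analytic continuation. The three limits you isolate (for $E_\omega$, $V_\nu$ and $U_{\nu,\eta}$, the latter two combined in your write-up) are exactly the ones verified in the paper, and the ``combinatorial balance'' you single out as the crux is precisely the content of Remark \ref{homogeneity}.

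There is, however, a genuine flaw in your justification of that balance for general $\nu\in P(\omega)$. The identity $\langle w_\nu\nu,\rho^\vee\rangle+\langle\eta^*,\rho^\vee_{R_\nu}\rangle=\langle\omega,\rho^\vee\rangle$ (with $\rho^\vee_{R_\nu}:=\frac12\sum_{\alpha\in R_\nu\cap R^+}\alpha^\vee$) is correct, but the ``observation'' you derive it from---that $\eta^*$ shares its projection onto $\mathrm{span}(R_\nu)$ with $\omega$ while its complementary component matches $w_\nu\nu$---is false. Take $R=B_3$, $\omega=\omega_2=e_1+e_2$ and $\nu=e_3$, so that $w_\nu\nu=e_1$ and $R_\nu$ is the $B_2$ subsystem in the coordinates $1,2$: then $\eta^*=e_1+e_3$, whose projection onto $\mathrm{span}(R_\nu)$ is $e_1\neq e_1+e_2$ and whose complementary component is $e_3\neq w_\nu\nu$. (The identity itself does hold there: $\langle e_1,\rho^\vee\rangle+\langle e_1+e_3,\rho^\vee_{R_\nu}\rangle=3+2=5=\langle e_1+e_2,\rho^\vee\rangle$.) What the balance actually reduces to---after writing $2\langle\omega,\rho^\vee\rangle-2\langle w_\nu\nu,\rho^\vee\rangle=\sum_{\alpha\in R^+,\,\langle w_\nu\nu,\alpha^\vee\rangle=0}\langle\omega,\alpha^\vee\rangle$ and transporting this sum by $w_\nu^{-1}$ and the $W_\nu$-invariance of $\sum_{\beta\in R_\nu}|\langle\cdot,\beta^\vee\rangle|$---is the homogeneity property of Remark \ref{homogeneity} with $g_\alpha\equiv1$, which the paper verifies case by case for each irreducible root system. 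Some such verification (or an honest uniform argument) is needed to close your proof; the rest of it is sound and matches the paper's.
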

\begin{proof}
Let us temporarily assume that $x$ belongs to the open fundamental chamber $\text{Int}(C)$ and that the spectral parameter $\xi\in V_\mathbb{C}$ is generic in the sense that it satisfies the inequalities \eqref{generic} and does not belong to the weight lattice $P$.
Starting from the difference equation in Theorem \ref{HDE:thm}, we
parametrize the root multiplicities in accordance with Eq. \eqref{par}, multiply both sides by $e^{-t\langle \omega ,\rho^\vee\rangle} N_t\delta_t (x)$, and replace $x$ by $x+t\rho^\vee$.
The difference equation in Eqs. \eqref{CHDEa}--\eqref{CHDEd} is now recovered for $t\to+\infty$, in view of Eq. \eqref{lim1} and the limits
$$
\lim_{t\to +\infty} e^{-t\langle \omega ,\rho^\vee\rangle}E_\omega (x+t\rho^\vee) = \bar{E}_\omega (x) ,
$$
$$
\lim_{t\to +\infty} e^{-t\langle w_\nu \nu ,\rho^\vee\rangle} V_\nu  (\xi ) = \bar{V}_\nu (\xi ) ,
$$
and
$$
\lim_{t\to +\infty} e^{-t\langle \omega-w_\nu \nu ,\rho^\vee\rangle} U_{\nu ,\eta} (\xi ) = \bar{U}_{\nu ,\eta} (\xi ) .
$$
The first two of these  limits are evident, while for the last limit we used the property in Remark \ref{homogeneity} below (with $g_\alpha=1$).
Finally, the domain restrictions on $x$ and our genericity assumptions
on the spectral parameter $\xi$ are readily removed by analytic continuation, since $ \bar{F}_{\xi}(x)$ is real-analytic in $x\in V$ and both sides of the stated
difference equation extend holomorphically to $\xi\in V_\mathbb{C}$ (cf. Section \ref{holomorphy:sec}).
\end{proof}
For $\omega$ minuscule the difference equation in Theorem \ref{CHDE:thm} becomes of the form
\begin{equation}\label{Cmin}
\sum_{\nu\in W\omega} \bar{V}_{\nu }(\xi) \bar{F}_{\xi+\nu }(x) = e^{\langle \omega ,x\rangle}  \bar{F}_{\xi}(x),
\end{equation}
whereas for $\omega$ quasi-minuscule it reads
\begin{equation}\label{Cqmin}
\sum_{\nu\in W\omega}  \bar{V}_{\nu }(\xi) \bigl( \bar{F}_{\xi+\nu }(x) -\bar{F}_{\xi}(x) \bigr) = e^{\langle \omega ,x\rangle}  \bar{F}_{\xi}(x).
\end{equation}
When $R$ is of type $A$ one recovers from Eq. \eqref{Cmin} the corresponding difference equations for the Whittaker function found in Refs. \cite{kar-leb:integral,bab:equations,bor-cor:macdonald,skl:bispectrality,koz:aspects}, by varying $\omega$ over the fundamental weights (which in this special case are off course all minuscule).

\begin{remark}\label{homogeneity} 
To determine the asymptotic exponential growth-rate of the factor $U_{\nu, \eta} (\xi)$ for $t\to +\infty$ in the proof of Theorem \ref{CHDE:thm},
we exploited the property that for
any $\mu,\omega \in P^+$ with $\mu<\omega$ and $\langle\omega,\alpha^\vee\rangle \leq 2$ ($\forall\alpha\in R^+$):
$$
\sum_{\substack{\alpha\in R^+\\ \langle\mu,\alpha^\vee\rangle >0}} g_\alpha \langle\mu,\alpha^\vee\rangle 
=
\sum_{\substack{\alpha\in R^+\\ \langle\mu,\alpha^\vee\rangle >0}} g_\alpha \langle\omega,\alpha^\vee\rangle $$
(as is readily verified on a case-by-case basis for each type of irreducible root system).
\end{remark}

\bibliographystyle{amsplain}

\end{document}